\theoremstyle{plain}
\newtheorem{thm}{\protect\theoremname}
  \theoremstyle{plain}
  \newtheorem{lem}[thm]{\protect\lemmaname}
  \theoremstyle{plain}
  \theoremstyle{plain}
 \theoremstyle{plain}
\newtheorem{dfn}[thm]{\protect\definitionname}
  \providecommand{\lemmaname}{Lemma}
  \providecommand{\propositionname}{Proposition}
\providecommand{\theoremname}{Theorem}
\providecommand{\corollaryname}{Corollary}
\providecommand{\definitionname}{Definition}
\def\ol{\overline}
\newcommand{\reg}{\mathrm{reg}}
\newcommand{\sur}{\mathrm{sur}}
\def\R{\mathbb{R}}
\def\eps{\varepsilon}
\def\la{\lambda}
\def\grf{\mathrm{Gr}\,}
\def\Ri{\mathbb{R}\cup\{+\infty\}}
\begin{document}

\title{On Characterizations of Metric Regularity\\ of Multi-valued Maps
\footnote{Research  supported by the Scientific Fund of Sofia University under grant 80-10-133/25.04.2018.}}

\author{M. Ivanov and N. Zlateva}
\date{\today}

\maketitle

\bigskip

\bigskip

\centerline{\textsl{Dedicated to Professor Alexander D. Ioffe}}

\bigskip

\bigskip

\begin{abstract}
 We provide a new proof along the lines of the recent book of A. Ioffe of a  1990's result of H. Frankowska showing that metric regularity of a multi-valued map can be characterized by regularity of its  contingent variation -- a  notion extending contingent derivative.
\end{abstract}

\textbf{Keywords:} surjectivity,  metric regularity,  multi-valued map.

\emph{AMS Subject Classification}: 49J53, 47H04, 54H25.


\section{Introduction}
Metric regularity, as well as,  the equivalent to it linear openness and pseudo-Lipschitz property of the inverse, are very important concepts in Variational Analysis. They have been intensively studied as it can be seen in a number of recent monographs, e.g. \cite{borwein-zhu-book, Penot-book, DR_book, Mordukhovich-book} and the references therein. A very rich and instructive survey on metric regularity is the book of A. Ioffe~\cite{Ioffe_book}.

 It may be noted in Chapter V of \cite{Ioffe_book} that the modulus of regularity of a multi-valued map between Banach spaces is estimated in terms of the tangential cones to its graph. The estimates are precise, but they are not characteristic. This is because in infinite dimensions a map may well be regular and the tangential cones to its graph be insufficiently informative, for details see \cite{franqui}.

 In \cite{Frankowska} H. Frankowska   introduced  the notion of contingent variation of a multi-valued map which extends  Bouligand tangential cone. This  notion can precisely characterize metric regularity.

 Let $(X,d)$ and $(Y,d)$  be  metric spaces and let
 $$
 F:X\rightrightarrows Y
 $$
 be a multi-valued map. If $V\subset Y$ the restriction $F^V$ is defined by
 $$
 	F^V(x) := F(x)\cap V,\quad\forall x\in X,
 $$
 see \cite[p.54]{Ioffe_book}. The properties related to the so restricted map are called \textit{restricted}.

 For example, the multi-valued map $F:X\rightrightarrows Y$ is called \textit{restrictedly Milyutin regular} on $(U,V)$, where $U\subset X$ and $V\subset Y$, if there exists a number $r>0$ such that
 	$$
 		B(v,rt)\cap V\subset F(B(x,t))
 	$$
 	whenever $(x,v)\in \grf F\cap (U\times V)$ and $B(x,t)\subset U$, where $B(x,t)$ is the closed ball with center $x$ and radius $t$: $B(x,t):=\{ u\in X:\ d(u,x)\le t \}$, and $\grf F = \{ (x,v):\ v\in F(x) \}$.
 	
 	The supremum of all such $r$ is called \textit{modulus of surjection},  denoted by
 	$$
 		\sur_mF^V(U|V).
 	$$
 	By convention, $\sur_mF^V(U|V) = 0$ means that $F$ is not restrictedly Milyutin regular on $(U,V)$.

This notion taken from \cite{Ioffe_book} is explained in great detail in Section~\ref{sec:milutin} below.

\medskip
In the literature, e.g. \cite[Section~5.2]{Ioffe_book}, there are various estimates of $\sur_mF^V(U|V)$ and related moduli in terms of  derivative-like objects. Unlike the so called \textit{co-derivative criterion}, see \cite[Section~5.2.3]{Ioffe_book}, most of the \textit{primal} estimates are not characteristic in general.  Here we re-establish one primal criterion which complements \cite[Section~5.2.2]{Ioffe_book} and is, moreover, characteristic. It is essentially done by H. Frankowska in \cite{Frankowska}, see also \cite{franqui}. There a new derivative-like object is defined as follows.

\smallskip
	Let $(X,d)$ be a metric space, $(Y, \|\cdot \|)$ be a Banach  space,
	$
	F:X\rightrightarrows Y
	$
	be a multi-valued map.
 For $(x,y)\in \grf F$ the  \textit{contingent variation} of $F$ at $(x,y)$ is the closed set
  	$$
	F ^{(1)} (x,y):= \limsup _{t\to 0^+} \frac{F(B(x,t))-y}{t},
	$$
where $\limsup$ stands for the Kuratowski limit superior of sets.

Equivalently, $v\in F ^{(1)} (x,y)$ exactly when there exist a sequence of reals $t_n\downarrow 0$ and a sequence $(x_n,y_n)\in \grf F$ such that
$d(x,x_n)\le t_n$ and
\[
\left \| v-\frac {y_n-y}{t_n}\right \| \to 0, \mbox { when } n\to \infty.
\]



This notion extends the so-called contingent, or graphical, derivative usually denoted by $DF(x,y)$, e.g.  \cite[pp.163, 202]{Ioffe_book}.

\medskip
Our main result can now be stated. As usual, $B_Y$ denotes the closed unit ball of the Banach space $(Y,\| \cdot \|)$.

\begin{thm}\label{F_result}
	Let $(X,d)$ be a metric space and $(Y,\| \cdot \|)$ be a Banach space, let $U\subset X$ and $V\subset Y$ be non-empty open sets. Let
	$$
	F:X\rightrightarrows Y
	$$
	be a multi-valued map with complete graph.
	
	$F$ is restrictedly Milytin regular on $(U,V)$ with $\sur _m F^V(U\vert V)\ge r>0$ if and only if
	\begin{equation}\label{eq:fr-1}
		F^{(1)}(x,v)\supset r B_Y\ \mbox{ for all }(x,v)\in \grf F \cap (U\times V).
	\end{equation}	
\end{thm}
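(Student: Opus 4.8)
The plan is to prove the two implications separately; necessity of \eqref{eq:fr-1} is routine, and its sufficiency carries the content. \emph{Necessity.} Suppose $\sur_m F^V(U\vert V)\ge r$, fix $(x,v)\in\grf F\cap(U\times V)$ and $w$ with $\|w\|<r$, and choose $r'\in(\|w\|,r)$. Since $U,V$ are open, for all small $t>0$ we have $B(x,t)\subset U$ and $v+tw\in V$; as $d(v+tw,v)=t\|w\|<r't$, the point $v+tw$ lies in $B(v,r't)\cap V$, which by restricted Milyutin regularity with constant $r'$ (legitimate since $r'<r\le\sur_m F^V(U\vert V)$) is contained in $F(B(x,t))$. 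Thus there is $x_t\in B(x,t)$ with $v+tw\in F(x_t)$. Taking $t_n\downarrow 0$ and $(x_n,y_n):=(x_{t_n},v+t_nw)\in\grf F$ gives $d(x,x_n)\le t_n$ and $(y_n-v)/t_n=w$, hence $w\in F^{(1)}(x,v)$. So $F^{(1)}(x,v)$ contains the open ball of radius $r$; being closed, it contains $rB_Y$.

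\emph{Sufficiency.} Assume \eqref{eq:fr-1}, fix $r'\in(0,r)$, and let $(x_0,v_0)\in\grf F\cap(U\times V)$, $t>0$ with $B(x_0,t)\subset U$, and $\bar y\in B(v_0,r't)\cap V$; we seek $\bar x\in B(x_0,t)$ with $\bar y\in F(\bar x)$. Apply Ekeland's variational principle on the complete metric space $(\grf F,\rho)$, where $\rho\big((x,v),(x',v')\big):=\max\{d(x,x'),r^{-1}\|v-v'\|\}$, to the nonnegative $1$-Lipschitz function $f(x,v):=\|v-\bar y\|$; note $f(x_0,v_0)=\|v_0-\bar y\|\le r't$. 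As $\|v_0-\bar y\|\le r't<rt$, one may run Ekeland with radius $t$ and with a parameter $\varepsilon>f(x_0,v_0)-\inf_{\grf F}f$ for which the slope $\varepsilon/t$ is still $<r$; this produces $(\bar x,\bar v)\in\grf F$ with $f(\bar x,\bar v)\le f(x_0,v_0)$, $\rho\big((\bar x,\bar v),(x_0,v_0)\big)\le t$, and $f(x,v)>f(\bar x,\bar v)-(\varepsilon/t)\,\rho\big((x,v),(\bar x,\bar v)\big)$ for all $(x,v)\ne(\bar x,\bar v)$. From the metric bound $d(\bar x,x_0)\le t$, so $\bar x\in B(x_0,t)\subset U$. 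It remains to show $\bar v=\bar y$. Suppose not, and (the key point) suppose $(\bar x,\bar v)\in\grf F\cap(U\times V)$; then \eqref{eq:fr-1} gives $w:=r(\bar y-\bar v)/\|\bar y-\bar v\|\in F^{(1)}(\bar x,\bar v)$, so there are $t_n\downarrow 0$ and $(x_n,y_n)\in\grf F$ with $d(\bar x,x_n)\le t_n$ and $(y_n-\bar v)/t_n\to w$. For $n$ large, $f(x_n,y_n)=\|(\bar v-\bar y)+(y_n-\bar v)\|\le\|\bar v-\bar y\|-t_nr+o(t_n)=f(\bar x,\bar v)-t_nr+o(t_n)$, while $\rho\big((x_n,y_n),(\bar x,\bar v)\big)\le t_n+o(t_n)$ because $\|y_n-\bar v\|\le t_nr+o(t_n)$. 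Inserting these into the Ekeland inequality forces $\varepsilon/t\ge r$ in the limit, contradicting $\varepsilon/t<r$. Hence $\bar v=\bar y$, i.e.\ $\bar y\in F(\bar x)\subset F(B(x_0,t))$; since $r'<r$ and the data were arbitrary, $\sur_m F^V(U\vert V)\ge r$.

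\emph{The main obstacle.} Everything hinges on the bracketed clause — that the Ekeland point satisfies $\bar v\in V$ (the inclusion $\bar x\in U$ is immediate from $d(\bar x,x_0)\le t$ and $B(x_0,t)\subset U$) — because \eqref{eq:fr-1} is postulated only on $U\times V$. When $\|v_0-\bar y\|<\mathrm{dist}(\bar y,Y\setminus V)$ this is automatic, since $\|\bar v-\bar y\|\le f(\bar x,\bar v)\le\|v_0-\bar y\|$ confines $\bar v$ to a ball about $\bar y$ lying in $V$. A general $\bar y\in B(v_0,r't)\cap V$ must then be handled by reducing to this local situation through a covering/iteration scheme: one reaches $\bar y$ along a chain of intermediate points of $V$ coming successively closer to $\bar y$, applies the local statement along the chain, and assembles the resulting preimages into a Cauchy sequence whose total $X$-displacement stays below $\|v_0-\bar y\|/r\le(r'/r)\,t<t$ thanks to the rate $r$, so that its limit — which exists since $\grf F$ is complete — is the desired preimage inside $B(x_0,t)$. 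It is this propagation of the purely infinitesimal, $(U\times V)$-local data of \eqref{eq:fr-1} to an honest preimage, rather than the variational estimate itself, that I expect to be the delicate part.
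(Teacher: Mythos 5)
Your necessity argument is correct and coincides with the paper's. The variational core of your sufficiency argument is also the right one and is essentially the mechanism the paper uses: Ekeland's principle on the graph with the product metric $d_\xi$, $\xi\approx r^{-1}$, applied to $(x,v)\mapsto\|v-\bar y\|$, followed by using the vector $r(\bar y-\bar v)/\|\bar y-\bar v\|\in F^{(1)}(\bar x,\bar v)$ to contradict the approximate minimality of the Ekeland point (the paper packages exactly this computation as Lemma~\ref{lem_for_criterion} feeding into the Ekeland-based criterion of Theorem~\ref{new}).

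The obstacle you flag at the end, however, is a genuine gap, and the repair you sketch does not work. The gap is real: Ekeland only gives $\|\bar v-\bar y\|\le\|v_0-\bar y\|$, and since $V$ is merely open this does not put $\bar v$ in $V$, so \eqref{eq:fr-1} cannot be invoked at $(\bar x,\bar v)$. Your proposed chaining scheme needs intermediate targets $\bar y_0=v_0,\bar y_1,\dots\to\bar y$ which (a) lie in $V$, so that each newly produced pair $(x_k,\bar y_k)\in\grf F$ is an admissible base point for the next application of the local statement, (b) satisfy $\|\bar y_{k+1}-\bar y_k\|<d(\bar y_{k+1},Y\setminus V)$ so that the local statement applies, and (c) have total length $\sum_k\|\bar y_{k+1}-\bar y_k\|$ not exceeding $\|v_0-\bar y\|$ by more than the factor $r/r'$, so that the accumulated $X$-displacement stays below $t$. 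Such a chain need not exist: $V$ is an arbitrary open set, so the segment $[v_0,\bar y]$ need not lie in $V$, the intrinsic distance inside $V$ from $v_0$ to $\bar y$ can exceed $(r/r')\|v_0-\bar y\|$, and $V$ may even be disconnected with $v_0$ and $\bar y$ in different components — yet the theorem still asserts $\bar y\in F(B(x_0,t))$ in all these cases. So the reduction to the "local situation" imposes a connectivity condition on $V$ that the statement does not assume, and the iteration cannot even start in general.

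The way the paper avoids this is not by iterating, but by never leaving $X\times V$: the Ekeland argument is run for the restricted map $F^V$ (Theorem~\ref{new} applied to $F^V$), so the reference point it produces has second coordinate in $V$ by construction, and the decrease condition of Lemma~\ref{lem_for_criterion} only has to be verified at points of $\grf F\cap(U\times V)$ — note that the improved pair $(u,w)$ it produces is allowed to lie anywhere in $\grf F$. To complete your proof you should do the same: apply Ekeland on a suitable complete subset of $\grf F\cap(X\times V)$ (this requires some care, since $\grf F\cap(X\times V)$ itself need not be complete), rather than on all of $\grf F$ followed by a chaining argument.
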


This result is essentially established by H. Frankowska in \cite[Theorem 6.1 and Corollary 6.2]{Frankowska}. However, there it is presented as a characterization of local modulus of regularity in terms of the local variant of the condition~\eqref{eq:fr-1}. Here we render the characterization global. The technique in \cite{Frankowska} is different, but it again depends on Ekeland Variational Principle.

\smallskip

The rest of the article is organized as follows. In Section~\ref{sec:milutin} we provide for reader's convenience the relevant material from \cite{Ioffe_book}. We also present in another form the first  criterion for Milyutin regularity from \cite{Ioffe_book}. In Section~\ref{sec:main} we prove Theorem~\ref{F_result}.

\section{Milyutin regularity}\label{sec:milutin}

Let $(X,d)$ and $(Y,d)$ be metric spaces.
Let $U\subset X$ and $V\subset Y$, let $F:X \rightrightarrows Y$ be a multi-valued map and let $\gamma (\cdot)$ be extended real-valued function on $X$ assuming positive values (possibly infinite) on $U$.

\begin{dfn}(\textbf{linear openness}, \cite[Definition 2.21]{Ioffe_book})
$F$ is said to be $\gamma$-open at linear rate on $(U,V)$ if there is an $r>0$ such that
\[
B(F(x),rt)\cap V\subset F(B(x,t)),
\]
if $x\in U$ and $t<\gamma (x)$, i.e.
\[
B(v,rt)\cap V\subset F(B(x,t)),
\]
whenever $(x,v)\in \grf F$, $x\in U$ and $t<\gamma (x)$.
\end{dfn}

Denote by $\sur _\gamma F(U|V)$ the upper bound of all such $r>0$ and call it  \emph{modulus of $\gamma$-surjection} of $F$ on $(U,V)$. If no such $r$ exists, set $\sur _\gamma F(U|V){=}0$.

\begin{dfn}(\textbf{metric regularity}, \cite[Definition 2.22]{Ioffe_book})
$F$ is said to be $\gamma$-metrically regular on $(U,V)$ if there is  $\kappa >0$ such that
\[
d(x,F^{-1}(y))\le \kappa d(y,F(x)),
\]
provided  $x\in U$, $y\in V$ and $\kappa d(y,F(x))<\gamma (x)$.
\end{dfn}

Denote by $\reg _\gamma F(U|V)$ the lower bound of all such $\kappa >0$ and call it  \emph{modulus of $\gamma$-metric regularity} of $F$ on $(U,V)$. If no such $\kappa$ exists, set $\reg _\gamma F(U|V)=\infty $.

\begin{thm}(\textbf{equivalence theorem}, \cite[Theorem 2.25]{Ioffe_book})\label{Theorem 2.25 Ioffe}
The following are equivalent for any metric spaces $X$, $Y$, any $F:X\rightrightarrows Y$, any $U\subset X$, $V\subset Y$ and any extended real-valued function $\gamma (\cdot)$ which is positive on $U$:

a) $F$ is $\gamma$-open at linear rate un $(U,V)$;

b) $F$ is $\gamma$-metrically regular on $(U,V)$.

Moreover (under the convention $0.\infty =1$),
\[
\sur _\gamma F(U|V).\reg _\gamma F(U|V)=1.
\]
\end{thm}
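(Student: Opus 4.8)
The plan is to derive the two implications directly from the definitions, carrying the constants along so that the identity $\sur_\gamma F(U|V)\cdot\reg_\gamma F(U|V)=1$ falls out by passing to suprema and infima.

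For the implication (a)$\Rightarrow$(b), suppose $F$ is $\gamma$-open at linear rate on $(U,V)$, fix an admissible rate $r$ and any $r'\in(0,r)$, and set $\kappa:=1/r'$; I will show $F$ is $\gamma$-metrically regular on $(U,V)$ with this $\kappa$. Let $x\in U$ and $y\in V$ with $\kappa d(y,F(x))<\gamma(x)$. This forces $F(x)\neq\emptyset$ and is equivalent to $d(y,F(x))<r'\gamma(x)$, so one can choose $\varepsilon>0$ with $\rho:=d(y,F(x))+\varepsilon<r'\gamma(x)$ and then $v\in F(x)$ with $d(y,v)<\rho$. Putting $t:=\rho/r'$ one has $t<\gamma(x)$ and $d(y,v)<\rho=r't<rt$, hence $y\in B(v,rt)\cap V\subset F(B(x,t))$, which gives some $u\in B(x,t)$ with $y\in F(u)$; therefore $d(x,F^{-1}(y))\le t=(d(y,F(x))+\varepsilon)/r'$, and letting $\varepsilon\downarrow0$ yields $d(x,F^{-1}(y))\le\kappa d(y,F(x))$. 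As $r'<r$ and $r$ were arbitrary (with $r$ an admissible openness rate), this gives $\reg_\gamma F(U|V)\le 1/\sur_\gamma F(U|V)$.

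For the implication (b)$\Rightarrow$(a), suppose $F$ is $\gamma$-metrically regular on $(U,V)$ with constant $\kappa$, fix $\kappa'>\kappa$, and set $r:=1/\kappa'$; I will show $F$ is $\gamma$-open at linear rate with this $r$. Let $(x,v)\in\grf F$ with $x\in U$, let $t<\gamma(x)$, and take $y\in B(v,rt)\cap V$. Since $v\in F(x)$ we have $d(y,F(x))\le d(y,v)\le rt$, hence $\kappa d(y,F(x))\le(\kappa/\kappa')t<t<\gamma(x)$, so $\gamma$-metric regularity applies and $d(x,F^{-1}(y))\le\kappa d(y,F(x))\le(\kappa/\kappa')t<t$. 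The strict inequality yields $u\in F^{-1}(y)$ with $d(x,u)<t$, so $u\in B(x,t)$ and $y\in F(u)\subset F(B(x,t))$; thus $B(v,rt)\cap V\subset F(B(x,t))$ for all admissible $(x,v)$ and $t$. Since $\kappa'>\kappa$ was arbitrary (with $\kappa$ an admissible regularity constant), this gives $\sur_\gamma F(U|V)\ge 1/\reg_\gamma F(U|V)$.

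Together the two estimates give $\sur_\gamma F(U|V)\cdot\reg_\gamma F(U|V)=1$ when both quantities are positive and finite; and reading the implications contrapositively shows that one of the properties fails exactly when the other does, in which case $\sur_\gamma F(U|V)=0$ and $\reg_\gamma F(U|V)=\infty$, so the identity still holds under the convention $0\cdot\infty=1$. I do not anticipate a genuine obstacle here: the proof is a careful unwinding of the definitions, and the points that need attention are keeping the auxiliary constant strictly between the working constant and its reciprocal, so that the closed-ball memberships and the constraint $t<\gamma(x)$ come out with the right strict or non-strict inequalities, together with the bookkeeping of the degenerate case via the stated convention.
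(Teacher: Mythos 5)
Your proof is correct. Note that the paper itself does not prove this statement: it is quoted verbatim from Ioffe's book (Theorem 2.25 there) and used as a black box, so there is no in-paper argument to compare against. Your derivation is the standard one — passing to a slightly smaller rate $r'<r$ (resp.\ slightly larger constant $\kappa'>\kappa$) to absorb the closed-ball/infimum slack, verifying the proviso $t<\gamma(x)$ (resp.\ $\kappa d(y,F(x))<\gamma(x)$) along the way, and then taking suprema/infima to get $\reg_\gamma F(U|V)\le 1/\sur_\gamma F(U|V)$ and $\sur_\gamma F(U|V)\ge 1/\reg_\gamma F(U|V)$, with the degenerate cases handled by the convention $0\cdot\infty=1$ — and it is sound.
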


\begin{dfn}(\textbf{regularity}, \cite[Definition 2.26]{Ioffe_book})
We say that $F:X\rightrightarrows Y$ is  $\gamma$-regular on $(U,V)$ if the equivalent properties of Theorem~\ref{Theorem 2.25 Ioffe} are satisfied.
\end{dfn}

\begin{dfn}(\textbf{Miluytin regularity}, \cite[Definition 2.28]{Ioffe_book})
Set
\[
m_U(x):=d(x, X\setminus U).
\]
We shall say that $F$ is Milyutin regular on $(U, V)$ if it is $\gamma$-regular on $(U,V)$ with $\gamma (x)=m_U(x)$.
\end{dfn}

\smallskip

We will need also \textbf{Ekeland Variational Principle} (see \cite[p.45]{Phelps}): Let $(M,d)$ be a complete metric space, and $f:M\to \Ri$ be a proper, lower semicontinuous and bounded from below function. Assume that $f(\ol x)\le \inf f+\la \eps$ for some $\ol x\in M$ and $\la \eps >0$. Then there is
$\ol y\in M$ such that

(i) $f(\ol y)\le f(\ol x)-\la d(\ol x,\ol y)$;

(ii) $d(\ol x,\ol y)\le \eps$;

(iii) $f(x)+\la d(x,\ol y)\ge f(\ol y)$, for all $x\in M$.

\medskip

The following characterization of Milyutin regularity is very similar in form (in fact equivalent) to the so called \textbf{first criterion for Milyutin regularity}, see \cite[Theorem 2.47]{Ioffe_book}. It is also similar to \cite[Proposition~2.2]{cibul},  but there it is stated in local form. We present here a proof for reader's convenience.

Following \cite[p.35]{Ioffe_book} for $\xi>0$ we denote by $d_\xi$ the product metric
\begin{equation}\label{def:d:xi}
	d_\xi ((x_1,y_1),(x_2,y_2)):=\max \{ d(x_1,x_2),\xi d(y_1,y_2)\},
\end{equation}
where  $x_i\in X$, $y_i\in Y$, $i=1,2$, and $(X,d)$ and $(Y,d)$ are metric spaces.

\begin{thm}\label{new}
	Let $(X,d)$, $(Y,d)$ be metric spaces.  Let $F:X\rightrightarrows Y$ be a multi-valued map with complete graph. Let $U\subset X$ and $V\subset Y$.
	Then
\[ \sur _mF(U|V) =  \sup \{ r\ge 0: \exists\ \xi>0\mbox{ such that  } \]
\[\forall (x,v)\in \grf F,\ x\in U,\ y\in V\mbox{ satisfying }
0<d(y,v)<rm_U(x)
\]
\begin{equation}\label{star}
\exists (u,w)\in \grf F \mbox{ such that } d(y,w)<d(y,v)-rd_\xi((x,v),(u,w))\}.
\end{equation}
\end{thm}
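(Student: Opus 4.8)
The statement asserts equality of two numbers, so the natural strategy is to prove two inequalities: that the modulus of surjection dominates the supremum on the right, and vice versa. Both directions will hinge on Ekeland's Variational Principle applied to a suitable function on the complete metric space $\grf F$ (equipped with the product metric $d_\xi$), since this is the classical route from a pointwise ``descent'' condition to a genuine covering/openness estimate.

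For the inequality $\sur_mF(U|V) \ge (\text{right-hand side})$, I would fix an $r$ for which the descent condition~\eqref{star} holds with some $\xi>0$, take $(x,v)\in\grf F$ with $x\in U$ and $B(x,t)\subset U$, and aim to show $B(v,r't)\cap V\subset F(B(x,t))$ for any $r'<r$. So let $y\in B(v,r't)\cap V$; if $y\in F(u)$ for some $u\in B(x,t)$ we are done, so assume $d(y,F(x')) > 0$ along the way. The plan is to apply Ekeland to the function $(u,w)\mapsto d(y,w)$ on $\grf F$ (which is lower semicontinuous and bounded below, and finite at $(x,v)$ with value $d(y,v) \le r't < rt$), with the parameter $\lambda$ chosen slightly below $r$ and $\eps$ chosen so that $\lambda\eps$ barely exceeds $d(y,v)$; concretely $\eps$ a bit less than $t$. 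This yields an Ekeland point $(\bar u,\bar w)\in\grf F$ with $d_\xi((x,v),(\bar u,\bar w))\le \eps < t$, so $\bar u\in B(x,t)\subset U$ hence $m_U(\bar u)>0$, and with the property that $d(y,w) + \lambda d_\xi((\bar u,\bar w),(u,w)) \ge d(y,\bar w)$ for all $(u,w)\in\grf F$ — i.e. no strict descent is possible at $(\bar u,\bar w)$. If $d(y,\bar w)>0$ were true, the triple $(\bar u,\bar w,y)$ would (after checking $d(y,\bar w) < r\,m_U(\bar u)$, which follows from $d(y,\bar w)\le d(y,v) < rt \le r\,m_U$-type bounds plus the Ekeland decrease) satisfy the hypotheses of~\eqref{star}, producing a point contradicting the Ekeland inequality. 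Hence $\bar w = y$, so $y\in F(\bar u)$ with $\bar u\in B(x,t)$, as required. Letting $r'\uparrow r$ gives $\sur_mF(U|V)\ge r$.

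For the reverse inequality $\sur_mF(U|V) \le (\text{right-hand side})$, I would take $r < \sur_mF(U|V)$, so the restricted Milyutin covering property $B(v,rt)\cap V\subset F(B(x,t))$ holds whenever $(x,v)\in\grf F\cap(U\times V)$ and $B(x,t)\subset U$. I must produce a $\xi>0$ making~\eqref{star} hold; since there is no metric interaction forced by the covering property on the $Y$-side, I expect $\xi$ can be taken arbitrarily large (or the term $\xi d(y,w) \le d(x,u)$ made negligible), so effectively one needs: for $(x,v)\in\grf F$, $x\in U$, $y\in V$ with $0<d(y,v)<r\,m_U(x)$, there is $(u,w)\in\grf F$ with $d(y,w) < d(y,v) - r\,d(x,u)$ (and the $Y$-term controlled). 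Pick $t$ with $d(y,v)/r < t < m_U(x)$, so $B(x,t)\subset U$ and $d(y,v) < rt$, hence $y\in B(v,rt)\cap V\subset F(B(x,t))$, giving $u\in B(x,t)$ with $y\in F(u)$; then $(u,y)\in\grf F$, $d(y,y)=0$, and $d(x,u)\le t$. Taking $t$ close to $d(y,v)/r$ makes $d(y,v) - r\,d(x,u)$ positive, so $(u,w)=(u,y)$ works; one then verifies the $d_\xi$ version holds for suitable $\xi$ by the same choice (since $d(y,w)=0$, the extra term $\xi\cdot 0=0$ causes no trouble, and we only need $d(y,v) - r\max\{d(x,u),0\} > 0$). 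Letting $r\uparrow \sur_mF(U|V)$ closes the argument.

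The main obstacle, as usual in these Ekeland arguments, is the bookkeeping in the first direction: choosing $\lambda$ and $\eps$ so that simultaneously (a) the Ekeland point stays inside $B(x,t)$ so that $m_U(\bar u)$ is large enough, (b) the inequality $d(y,\bar w) < r\,m_U(\bar u)$ needed to invoke~\eqref{star} actually holds — this uses both $d(y,\bar w)\le d(y,\bar x)$-type estimates and the fact that $m_U$ is $1$-Lipschitz so $m_U(\bar u)\ge m_U(x) - d(x,\bar u) \ge t - \eps$ or similar — and (c) the strict-descent point supplied by~\eqref{star} genuinely violates Ekeland's (iii). Handling the degenerate possibility that $y$ already lies in $F$ of a nearby point, and the passage between the $d_\xi$-form and the plain form of the descent condition (which is where $\xi$ must be chosen, and where one must be slightly careful that enlarging $\xi$ only weakens~\eqref{star}'s conclusion in the direction we want), are the remaining delicate points.
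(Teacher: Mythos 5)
Your overall strategy coincides with the paper's: the inequality $\sur_mF(U|V)\ge s_2$ (where $s_2$ denotes the right-hand side) is obtained by applying Ekeland's principle to $(u,w)\mapsto d(y,w)$ on $(\grf F,d_\xi)$ and using \eqref{star} to rule out a positive value at the Ekeland point, while the reverse inequality is obtained by producing the witness $(u,y)$ directly from the covering property. Your Ekeland direction is sound as planned; in particular your bookkeeping for $d(y,\bar w)<r\,m_U(\bar u)$ via the $1$-Lipschitz property of $m_U$ and Ekeland's decrease inequality works (the paper does essentially the same computation in a slightly different arrangement, introducing an auxiliary radius $t'\in(t,m_U(x_0))$).

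The other direction, however, contains two genuine errors. First, you compute the $Y$-component of $d_\xi((x,v),(u,y))$ as $\xi\,d(y,y)=0$; it is actually $\xi\,d(v,y)>0$, since the distance is taken between the graph points $(x,v)$ and $(u,y)$, not evaluated at the function values. Consequently the required inequality $r\,d_\xi((x,v),(u,y))<d(y,v)$ forces $r\xi\,d(v,y)<d(v,y)$, i.e. $\xi<1/r$: the parameter $\xi$ must be chosen \emph{small}, and your remark that it ``can be taken arbitrarily large'' has the monotonicity backwards (enlarging $\xi$ enlarges $d_\xi$ and therefore makes \eqref{star} \emph{harder} to satisfy, both here and in the statement of the theorem). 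Second, picking $t$ with $d(y,v)/r<t<m_U(x)$ and obtaining $u\in B(x,t)$ only yields $r\,d(x,u)\le rt>d(y,v)$, so the needed strict inequality $r\,d(x,u)<d(y,v)$ can fail (for instance when $d(x,u)=t$); taking $t$ ``close to $d(y,v)/r$'' does not repair this, since $t$ must stay above $d(y,v)/r$ for the covering to apply to $y$. The standard fix, which the paper uses, is to run the covering at an intermediate rate $r'\in(r,\sur_mF(U|V))$ with $t:=d(y,v)/r'$; then $t<m_U(x)$, $r\,d(x,u)\le rt=(r/r')\,d(y,v)<d(y,v)$, and any $\xi$ with $\xi r'<1$ handles the $Y$-component. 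Both repairs are routine, but as written this half of your argument does not go through.
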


\begin{proof}
	Let us denote by $s_1$ the left hand side of the above equation, i.e. $s_1:=\sur _mF(U|V)$. In other words,
\[s_1=\sup \{ r\ge 0: B(v,rt)\cap V\subset F(B(x,t)),\ \forall (x,v)\in \grf F,\ x\in U, t<m_U (x)\}.\]
	
	Denote by $s_2$ the right hand side of the equation.
	
	We need to show that $s_1 = s_2$.
	
\medskip
	
	First, we will show that $s_1\le s_2$.
	
	If $s_1=0$ we have nothing to prove.
	
	Let $s_1>0$. Take $0<r<r'<s_1$. Let $x\in U$, $v\in F(x)$ be fixed. Let $y\in V$ be such that $0<d(y,v)<rm_U(x)$. In particular $0<d(y,v)<r'm_U(x)$. Set $\displaystyle t:=\frac{d(y,v)}{r'}$. Then $t<m_U(x)$. By $r'<s_1= \sur_mF(U|V)$ and by the definition of $\sur_mF(U|V)$ it holds that $y\in B(v,r't)\cap V\subset F(B(x,t))$, i.e. $y\in F(B(x,t))$. So, there exists $u\in B(x,t)$ such that $y\in F(u)$.
	
	Fix $\xi$ such that $0<\xi r'<1$. Then
	\[
	d_\xi((x,v),(u,y))=\max \{d(x,u),\xi d(v,y)\}\le\max \{t,\xi r't\}=t\max \{1,\xi r'\}=t,
	\]
	so
	\[
	r'd_\xi((x,v),(u,y))\le r't=d(y,v).
	\]
	Observe that $d_\xi((x,v),(u,y))>0$ since $d(v,y)>0$. The latter and $r'>r$ yield
	\[
	rd_\xi((x,v),(u,y))< r't<d(y,v),
	\]
	or
	\[
	0<d(y,v)-rd_\xi((x,v),(u,y)).
	\]
	Since $0=d(y,y)$ we get that
	\[
	d(y,y)<d(y,v)-rd_\xi((x,v),(u,y))
	\]
	and \eqref{star} holds with $w=y$ as $(u,y)\in \grf F$.
	
	This means that $r\le s_2$. Finally, $s_1\le s_2$.
	
\medskip

	Second, we will prove that $s_2\le s_1$.

If $s_2=0$ we have nothing to prove.
	
	Let now $s_2>0$. Let $0<r<s_2$. Let us fix $x_0\in U$, $v_0\in F(x_0)$ and $0<t<m_U(x_0)$.
	
	Fix $y\in V$ such that $d(y,v_0)\le rt$, i.e. $y\in B(v_0,rt)\cap V$. Let $M:= \grf F$, and let $\xi>0$ correspond to $r$ in the definition of $s_2$. It is clear that $(M,d_\xi)$ is a complete metric space.
	
	Consider the function $f:M\to \R$ defined as $f(u,w):=d(w,y)$.
	
	Then $f\ge 0$ and it is continuous on $M$. Since $f(x_0,v_0)=d(v_0,y)\le rt$, by Ekeland Variational Principle there exists $(x_1,v_1)\in M$ such that
	
	(i) $f(x_1,v_1)\le f(x_0,v_0)-r d_\xi ((x_1,v_1),(x_0,v_0))$;
	
	(ii) $d_\xi ((x_1,v_1),(x_0,v_0))\le t$;

	(iii) $f(u,w)+rd_\xi ((u,w),(x_1,v_1))\ge f(x_1,v_1)$, for all $(u,w)\in M$.
	
	Or, equivalently
	
	(i) $d(v_1,y)\le d(v_0,y)-r d_\xi ((x_1,v_1),(x_0,v_0))\le rt-r d_\xi ((x_1,v_1),(x_0,v_0))$;
	
	(ii) $d(x_1,x_0)\le t,\quad \xi d(v_1,v_0)\le t$;
	
	(iii) $d(w,y)+rd_\xi ((u,w),(x_1,v_1))\ge d(v_1,y)$, for all $(u,w)\in M$.
	
	Set $p:=d(v_1,y)$.

	Assume that $p>0$. Take $t'$ such that $t<t'<m_U(x_0)$. For  $\displaystyle x\in B\left (x_1,\frac{p}{r}+t'-t\right)$ we have that 	
	\begin{eqnarray*}
		d(x,x_0)&\le &d(x,x_1)+d(x_1,x_0)\\
		&\le &\frac{p}{r}+t'-t+d(x_1,x_0)\\
	\mbox{(using (i))}	&\le &\frac{rt-rd(x_1,x_0)}{r}+t'-t+d(x_1,x_0)\\
		&=&t-d(x_1,x_0)+t'-t+ d(x_1,x_0)\\
		&=&t'.
	\end{eqnarray*}
	Hence $\displaystyle B\left(x_1,\frac{p}{r}+t'-t\right)\subset B(x_0,t')\subset U$. Then $\displaystyle \frac{p}{r}+t'-t\le m_U(x_1)$, and  $\displaystyle \frac{p}{r}< m_U(x_1)$ because  $t'-t>0$. Hence, $0< d(v_1,y)<rm_U(x_1)$. But now \eqref{star} contradicts (iii).

Therefore, $p=0$ and then $y=v_1\in F(x_1)$. Since by (ii) $x_1\in B(x_0,t)$, we have $y\in F(B(x_0,t))\cap V$.
	
 Since $x_0\in U$, $v_0\in F(x_0)$, $y\in B(v_0,rt)\cap V$ and $0<t<m_U(x_0)$ were arbitrary, this means that $r\le s_1$. Since $0<r<s_2$ was arbitrary, $s_2\le s_1$, and the proof is completed.
\end{proof}

\medskip

In the definitions of regularity properties it is not required that $F(x)\subset V$. Such requirements can be included in the definitions as follows.

\begin{dfn}(\textbf{restricted regularity}, \cite[Definition 2.35]{Ioffe_book})
Set $F^V(x):=F(x)\cap V$. We define restricted $\gamma$-openness at linear rate and restricted $\gamma$-metric regularity on $(U,V)$ by replacing $F$ by $F^V$.
\end{dfn}

The equivalence  Theorem~\ref{Theorem 2.25 Ioffe} also holds for the restricted versions of the properties. The case is the same with Theorem~\ref{new}, where the proof needs only small adjustments when working with $F^V$ instead of $F$.

\section{Proof of the main result}\label{sec:main}
 The proof of our main result relies on the following Lemma.

\begin{lem}\label{lem_for_criterion}
	Let $(X,d)$ be a metric space and $(Y,\| \cdot \|)$ be a Banach space, let $U\subset X$ and $V\subset Y$ be non-empty sets  and let
	$$
	F:X\rightrightarrows Y
	$$
	be a multi-valued map.
	
	If for some  $r>0$ it holds that
	\[
	 F^{(1)}(x,v)\supset r B_Y\ \mbox{ for all }(x,v)\in \grf F \cap (U\times V),
	\]
	then for  any $\displaystyle 0< r' <r$ and any $\xi \in (r^{-1}, (r')^{-1})$ it holds that for any $x\in U$ and any $v\in F^V(x)$ and $y\in V\setminus\{v\}$ 
	there is $(u,w)\in \grf F$ such that
	\[
	\| y- w\|<\| y-v\|  -r'd_\xi((x,v),(u,w)).
	\]
	\end{lem}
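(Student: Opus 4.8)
The plan is to extract a point $v\in F^{(1)}(x,v)$-approximation from the hypothesis and convert it into the one-step decrease inequality. Since $r B_Y\subset F^{(1)}(x,v)$ and $y\neq v$, the vector $z:=r'(y-v)/\|y-v\|$ has norm $r'<r$, hence $z\in F^{(1)}(x,v)$. By the characterization of the contingent variation recalled in the introduction, there are $t_n\downarrow 0$ and $(x_n,w_n)\in\grf F$ with $d(x,x_n)\le t_n$ and $\|z-(w_n-v)/t_n\|\to 0$. I would fix a single $n$ large enough that the error term $\eps_n:=\|z-(w_n-v)/t_n\|$ is suitably small (how small to be pinned down below) and also $t_n$ small enough that $t_n\|z\|=t_n r'<\|y-v\|$; then set $(u,w):=(x_n,w_n)$.

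The core computation is an estimate of $\|y-w\|$. Write $w-v = t_n z + t_n e_n$ where $\|e_n\|=\eps_n$, so $w = v + t_n z + t_n e_n$. Since $z$ points from $v$ toward $y$, we have $\|y-(v+t_n z)\| = \|y-v\| - t_n\|z\| = \|y-v\|-t_n r'$ (valid because $t_n r'<\|y-v\|$), and therefore by the triangle inequality $\|y-w\|\le \|y-v\| - t_n r' + t_n\eps_n = \|y-v\| - t_n(r'-\eps_n)$. On the other side I must bound $d_\xi((x,v),(u,w)) = \max\{d(x,x_n),\,\xi\|v-w\|\}$. We have $d(x,x_n)\le t_n$, and $\|v-w\| = \|t_n z + t_n e_n\|\le t_n(r'+\eps_n)$, so $\xi\|v-w\|\le t_n\xi(r'+\eps_n)$. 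Since $\xi<(r')^{-1}$ we have $\xi r'<1$; choosing $\eps_n$ small enough that $\xi(r'+\eps_n)\le 1$ as well (possible since $\xi r'<1$ strictly), we get $d_\xi((x,v),(u,w))\le t_n$. Combining, $r' d_\xi((x,v),(u,w))\le r' t_n$, and we would like $\|y-w\|\le \|y-v\| - r' t_n$, i.e. we need $t_n(r'-\eps_n)\ge r' t_n$ — which is false with a non-strict bound. So the estimate must be arranged more carefully: keep the factor $r'$ in $r' d_\xi$ strictly below what the decrease gives, by exploiting that $\xi>r^{-1}$ forces $\xi\|v-w\|$ to be the dominant term and relating it back to $r$, or more simply by noting we only need a strict inequality and there is slack because $r'<r$. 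Concretely I would use $\|v-w\|\ge t_n(r'-\eps_n)$ together with $\xi>(r')^{-1}\cdot$(something)… the cleanest route: since $\xi r > 1$, we have $r' d_\xi \le r'\max\{d(x,x_n), \xi\|v-w\|\}$ and bound $d(x,x_n)\le t_n \le \xi\|v-w\|\cdot(\text{const})$ is not automatic, so instead bound $r' d_\xi((x,v),(u,w)) \le r' t_n$ directly and show $\|y-w\| < \|y-v\| - r' t_n$ using that for small enough $\eps_n$, $t_n(r'-\eps_n)$ can be taken $> r' t_n$ is impossible — hence the true mechanism is that the decrease in $f$ we obtain is $t_n r' $ exactly only in the limit, and the strictness comes from choosing $z$ of norm $r'$ with $r'<r$ but then running the argument with some $r''\in(r',r)$ in the role played above and $r'$ in the final inequality. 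That is, take $r''\in(r',r)$, put $z:=r''(y-v)/\|y-v\|$, and then $\|y-w\|\le \|y-v\|-t_n r'' + t_n\eps_n < \|y-v\| - t_n r' - r' t_n \cdot 0$; with $\eps_n < r''-r'$ and $\xi(r''+\eps_n)\le 1$ (possible as $\xi r''<1$ when $\xi<(r')^{-1}<(r'')^{-1}$… wait $\xi<(r')^{-1}$ does not give $\xi r''<1$) — so one instead needs $r''$ close to $r'$, which is fine since we only need $r''>r'$. Then $d_\xi\le t_n$ still, and $\|y-w\| \le \|y-v\| - t_n(r''-\eps_n) < \|y-v\| - t_n r' = \|y-v\| - r' t_n \le \|y-v\| - r' d_\xi((x,v),(u,w))$ would need the last $\le$ with a strict somewhere; since $d_\xi\le t_n$ gives $r' d_\xi \le r' t_n$ and we have $\|y-w\| < \|y-v\| - r' t_n \le \|y-v\| - r' d_\xi$. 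Done.

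The main obstacle, as the above makes clear, is bookkeeping the chain of strict and non-strict inequalities so that the final inequality is strict: one must introduce an auxiliary radius $r''$ with $r'<r''<r$, use it to build the approximating vector $z$ of norm $r''$, absorb the Kuratowski-limit error $\eps_n$ into the gap $r''-r'$, and simultaneously keep $\xi\|v-w\|\le t_n$ so that $d_\xi((x,v),(u,w))\le t_n$; the condition $\xi\in(r^{-1},(r')^{-1})$ is exactly what provides room for both $\xi(r''+\eps_n)\le 1$ and the later use in Theorem~\ref{new}. A secondary point to be careful about is the degenerate possibility $v=y$, excluded by hypothesis $y\in V\setminus\{v\}$, which guarantees $(y-v)/\|y-v\|$ is well-defined and that the equality $\|y-(v+t_n z)\| = \|y-v\| - t_n\|z\|$ holds for $t_n$ small. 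I would write the argument by first fixing $r''$ and $\eps_0>0$ small depending only on $r',r'',\xi$, then invoking the sequential description of $F^{(1)}(x,v)$ to pick a single admissible $(u,w)=(x_n,w_n)$ with $\eps_n<\eps_0$ and $t_n$ small enough, and finally assembling the two one-line estimates above.
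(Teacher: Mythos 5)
Your assembled argument is correct, and its skeleton is the same as the paper's: take the direction $(y-v)/\|y-v\|$, scale it to obtain a vector of $F^{(1)}(x,v)$, use the sequential description of the contingent variation to produce $(u_n,w_n)\in\grf F$ with $w_n= v+t_n z - t_n e_n$, and balance the decrease $\|y-v\|-\|y-w_n\|\approx t_n\|z\|$ against $r'd_\xi((x,v),(u_n,w_n))$. Where you genuinely diverge is in the bookkeeping of the max defining $d_\xi$. The paper scales the direction to norm exactly $r$, uses $\xi r>1$ to show that for large $n$ the second term dominates, so $d_\xi((x,v),(u_n,w_n))=\xi\|w_n-v\|\le \xi t_n(r+\|z_n\|)$, and closes with $r'\xi<1$; both halves of the hypothesis $\xi\in(r^{-1},(r')^{-1})$ are used. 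You instead scale to norm $r''$ with $r'<r''<\xi^{-1}$ so that $\xi\|w_n-v\|\le t_n$, hence $d_\xi\le t_n$, and you absorb the error $\eps_n$ into the gap $r''-r'$; this only uses $\xi<(r')^{-1}$ (which makes the interval for $r''$ non-empty), and the lower bound $\xi>r^{-1}$ is never needed — so your variant in fact proves the conclusion for all $\xi\in(0,(r')^{-1})$, a slight strengthening (harmless, since Theorem~\ref{new} only asks for some $\xi>0$). Two points to repair in the write-up: your closing summary still says ``$r'<r''<r$'', but as you yourself noticed mid-argument the correct constraint is $r''\in(r',\xi^{-1})$ (which automatically gives $r''<r$ when $\xi>r^{-1}$); and the several false starts (taking $\|z\|=r'$ and obtaining only a non-strict inequality) should be excised, leaving only the final chain $\|y-w_n\|\le\|y-v\|-t_n(r''-\eps_n)<\|y-v\|-r't_n\le\|y-v\|-r'd_\xi((x,v),(u_n,w_n))$, valid once $\eps_n<r''-r'$, $\xi(r''+\eps_n)\le1$ and $t_nr''<\|y-v\|$.
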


\begin{proof}
	Let $\displaystyle r' \in (0,r)$ be fixed.
	
	Fix $\xi >0$ such that $\displaystyle (r')^{-1}> \xi > r^{-1}$.

	Take $(x,v)$ such that $(x,v)\in \grf F\cap (U\times V)$.
	
	Fix $ y\in V$ such that $0<\| y-v\|  $.

	Set $\displaystyle \bar v:= r\frac{y-v}{\| y-v\|}$. Obviously $\|  \bar v \| =r$. By assumption,
	$F^{(1)}(x,v)\ni \bar v$. By definition of the contingent variation there exist $t_n\downarrow 0$,
	$u_n\in X$ as well as $w_n\in Y$ and $z_n \in Y$ such that $w_n\in F(u_n) $, $d(x,u_n)\le t_n$, $\| z_n\| \to 0$  and
	\begin{equation}\label{eq:u}
		v+t_n\bar v=w_n+ t_nz_n.
	\end{equation}
	
	Note first that for $n$ large enough
	\begin{equation}\label{eq:xy}
		\xi\|w_n-v\| > t_n \ge d(x,u_n)
		\Rightarrow
		d_\xi((x,v),(u_n,w_n))=\xi \|w_n-v\|.
	\end{equation}
	Indeed, $\|w_n-v\| = t_n\|\bar v - z_n\| \ge t_n (r-\|z_n\|)$ and, since $\xi(r-\|z_n\|)\to\xi r > 1$ as $n\to\infty$, we have $\xi\|w_n-v\| > t_n$ for $n$ large enough.
	
From \eqref{eq:u} we have
\begin{equation}\label{nnn}
y-w_n=y-v-t_n\ol v+t_nz_n.
\end{equation}

Since
$$
y-v-t_n\ol v = \left(1-t_nr\| y-v\| ^{-1}\right)(y-v),
$$
and since $1-t_nr\| y-v\| ^{-1}>0$ for $n$ large enough, we have for such $n$ that
$$
\|y-v-t_n\ol v \|= \left(1-t_nr\| y-v\| ^{-1}\right)\| y-v\| = \| y-v\| - t_nr.
$$
Combining the latter with \eqref{nnn} we get for $n$ large enough
\begin{eqnarray}\label{trn}
		\|y-w_n\|&=&\|y-v-t_n\bar v +t_nz_n\| \nonumber\\
                 &\le &\|y-v-t_n\bar v\| +t_n\|z_n\| \nonumber\\
                 &=&\| y-v\| -t_n(r-\| z_n\|).
\end{eqnarray}

	On the other hand, \eqref{eq:u} can be rewritten as $w_n -v = t_n\bar v - t_nz_n$, hence
 $$
 \| w_n-v\| =t_n\| \ol v-z_n\| \le t_n (r+\| z_n\| ),
 $$
and using this estimate we obtain that
	$$
		\liminf_{n\to\infty} \frac{t_n(r-\|z_n\|)}{r'\xi\|v-w_n\|} \ge \liminf_{n\to\infty} \frac{t_n(r-\|z_n\|)}{r'\xi t_n(r+\| z_n\|)} = \frac{1}{r'\xi} > 1.
	$$
From this and \eqref{trn} we have that for large $n$
	$$
		\|y-w_n\| < \|y-v\| - r'\xi\|v-w_n\| .
	$$
Using \eqref{eq:xy} we finally obtain that for all $n$ large enough
	$$
		\|y-w_n\| < \|y-v\| -r'd_\xi((x,v),(u_n,w_n))
	$$
	and the claim follows.
\end{proof}

Proving our main result is now straightforward.

\setcounter{thm}{0}

\begin{thm}
Let $(X,d)$ be a metric space and $(Y,\| \cdot \|)$ be a Banach space, let $U\subset X$ and $V\subset Y$ be non-empty open sets. Let
	$$
	F:X\rightrightarrows Y
	$$
	be a multi-valued map with complete graph.
	
	$F$ is restrictedly Milytin regular on $(U,V)$ with $\sur _m F^V(U\vert V)\ge r>0$ if and only if
	\[
	 F^{(1)}(x,v)\supset r B_Y\ \mbox{ for all }(x,v)\in \grf F \cap (U\times V).
	\]
\end{thm}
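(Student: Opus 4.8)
The plan is to prove the two implications of the equivalence separately; essentially all the work sits in the implication ``\eqref{eq:fr-1}~$\Rightarrow$~regularity'', and it has already been isolated in Lemma~\ref{lem_for_criterion}. \textbf{From \eqref{eq:fr-1} to regularity.} Fix an arbitrary $r'\in(0,r)$ and choose $\xi\in(r^{-1},(r')^{-1})$, so that $r\xi>1$ and $r'\xi<1$. By Lemma~\ref{lem_for_criterion}, for every $x\in U$, every $v\in F^V(x)$ and every $y\in V\setminus\{v\}$ there is $(u,w)\in\grf F$ with
\[
\|y-w\|<\|y-v\|-r'd_\xi((x,v),(u,w)).
\]
Moreover, in the proof of that Lemma the point $w$ is one of the approximating points $w_n$, for which $\|w_n-v\|\to 0$, so since $V$ is open we may assume $w\in V$, i.e. $(u,w)\in\grf F^V$. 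This is exactly the descent property appearing in the restricted version of Theorem~\ref{new} for the rate $r'$ and this $\xi$ --- in fact it is stronger, since the Lemma furnishes such a pair for \emph{all} $y\in V\setminus\{v\}$, a fortiori for those with $0<d(y,v)<r'm_U(x)$. Hence that restricted analogue of Theorem~\ref{new} gives $\sur_m F^V(U\vert V)\ge r'$. Letting $r'\uparrow r$ yields $\sur_m F^V(U\vert V)\ge r$, so $F$ is restrictedly Milyutin regular on $(U,V)$.

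\textbf{From regularity to \eqref{eq:fr-1}.} Fix $(x,v)\in\grf F\cap(U\times V)$. Since $F^{(1)}(x,v)$ is closed and $rB_Y$ is the closure of $\{\bar v:\|\bar v\|<r\}$, it suffices to prove $\bar v\in F^{(1)}(x,v)$ whenever $\|\bar v\|<r$. Pick $\rho$ with $\|\bar v\|<\rho<r\le\sur_m F^V(U\vert V)$; then the defining property of the modulus of surjection gives $B(v,\rho t)\cap V\subset F(B(x,t))$ for every $t$ with $B(x,t)\subset U$. Because $x\in U$ and $v\in V$ are interior points, for all small enough $t>0$ we have $B(x,t)\subset U$ and the point $y_t:=v+t\bar v$ lies in $V$; since $\|y_t-v\|=t\|\bar v\|<\rho t$ we get $y_t\in B(v,\rho t)\cap V\subset F(B(x,t))$. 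Hence $y_t\in F(x_t)$ for some $x_t$ with $d(x,x_t)\le t$, so $(x_t,y_t)\in\grf F$, $d(x,x_t)\le t$ and $(y_t-v)/t=\bar v$ \emph{exactly}. Taking any sequence $t_n\downarrow 0$ with each $t_n$ small enough, the reals $t_n$ together with the pairs $(x_{t_n},y_{t_n})\in\grf F$ witness $\bar v\in F^{(1)}(x,v)$. Thus $F^{(1)}(x,v)\supset\{\bar v:\|\bar v\|<r\}$, and closedness of $F^{(1)}(x,v)$ gives $F^{(1)}(x,v)\supset rB_Y$.

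The forward direction is a one-line verification once one notices that $y_t=v+t\bar v$ is an admissible target whose restricted regular image already contains it. The only genuine obstacle is the backward direction, where the heart of the matter --- passing from the tangential inclusion $F^{(1)}(x,v)\supset rB_Y$ to a quantitative $d_\xi$-descent estimate that the Ekeland argument behind Theorem~\ref{new} can digest --- is exactly Lemma~\ref{lem_for_criterion} (and through it, completeness of $\grf F$). What still requires care is the coupling of the constants $r'$ and $\xi$ and the observation, using openness of $V$, that the descent pair produced by the Lemma can be kept inside $\grf F^V$, so that the restricted form of Theorem~\ref{new} genuinely applies.
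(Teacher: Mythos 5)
Your proof is correct and follows essentially the same route as the paper: the implication from the tangential inclusion to regularity is obtained by feeding Lemma~\ref{lem_for_criterion} into the restricted version of Theorem~\ref{new} and letting $r'\uparrow r$, while the converse is the same direct verification using openness of $U$ and $V$. You are in fact slightly more careful than the paper on two points --- observing that the descent pair can be kept in $\grf F^V$ (since $w_n\to v\in V$), and using closedness of $F^{(1)}(x,v)$ to pass from the open ball $\{\bar v:\|\bar v\|<r\}$ to all of $rB_Y$.
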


\begin{proof}
Let
\[
	F^{(1)}(x,v)\supset r B_Y\ \mbox{ for all }(x,v)\in \grf F \cap (U\times V).
	\]
From  Lemma~\ref{lem_for_criterion} and Theorem~\ref{new} it follows that $\sur _m F^V(U\vert V)\ge r$.

Conversely, let $\sur _m F^V(U\vert V)\ge r>0$. This means that
\[
B(v,rt)\cap V \subset F(B(x,t))
\]
whenever $(x,v)\in \grf F$, $x\in U$, $v\in V$ and $t<m_U(x)$.

Take arbitrary $(x,v)\in \grf F^V$, $x\in U$ and note that $m_U(x)>0$ because $U$ is open. Take  positive $t$ such that  $t<m_U(x)$.

For any $y\in rB_Y $ it holds that $v+ty\in B(v,rt)$. Moreover,  $v+ty\in V$ will be true for small $t$ because $V$ is open. Then, by assumption, $v+ty \in F(B(x,t))$, so
$\displaystyle y\in \frac{F(B(x,t))-v}{t}$ which means that $y\in F^{(1)}(x,v)$. Hence, $F^{(1)}(x,v)\supset rB_Y$.
\end{proof}

\bigskip

\noindent\textbf{Acknowledgements.} We wish to express our gratitude for the interesting discussions we have had with Professor Ioffe in the summer of 2018 on some topics in his recent monograph \cite{Ioffe_book}, and for his kind attention.

\newpage

\end{document}